\numberwithin{equation}{section}
\newtheorem{theo}{Theorem}[section]
\newtheorem{rem}{Remark}[section]
\newtheorem{defi}{Definition}[section]
\newcommand{\R}        {{{\rm I\! R}}}
\newcommand{\E}        {{{\rm I\! E}}}
\renewcommand{\P}        {{ {\rm I \hskip -2pt P}}}
\begin{document}
\title{General Fully Coupled Forward Backward Stochastic Differential Equations with delayed generator}
	
	\author{Auguste Aman$^{a}$ \thanks{aman.auguste@ufhb.edu.ci/ augusteaman5@yahoo.fr}\;\; Harouna Coulibaly$^{b}$ \thanks{harounbolz@yahoo.fr}\; and\; Jasmina \DJ or\dj evi\'c $^{c,d}$ \thanks{jasmindj@math.uio.no/djordjevichristina@gmail.com, corresponding author} \\ 
a.	UFR Maths et Informatique, Université Félix H. Boigny, Abidjan, Côte d'Ivoire\;\;\;\;\;\;\;\;\;\;\;\;\;\;\;\;\\
b. Ecole Superieure Africaines des TIC, Abidjan, Côte d'Ivoire\;\;\;\;\;\;\;\;\;\;\;\;\;\;\;\;\;\;\;\;\;\;\;\;\;\;\;\;\;\;\;\;\;\;\;\;\;\;\;\;\;\;\;\;\\
c. Department of Mathematics, University of  Oslo, Norway\;\;\;\;\;\;\;\;\;\;\;\;\;\;\;\;\;\;\;\;\;\;\;\;\;\;\;\;\;\;\;\;\;\;\;\;\;\;\;\;\;\;\;\;\;\;\;\;\;\\
d. Faculty of Science and Mathematics, University of Ni\v s, Serbia\;\;\;\;\;\;\;\;\;\;\;\;\;\;\;\;\;\;\;\;\;\;\;\;\;\;\;\;\;\;\;\;\;\;\;\;\;\;\;\;\;}

	\date{}
	\maketitle \thispagestyle{empty} \setcounter{page}{1}
	
	% ------- [First Page Running Head] - place it immediately after title! ------
	\thispagestyle{fancy} \fancyhead{}
	\fancyfoot{}
	\renewcommand{\headrulewidth}{0pt}
	%------------------------------------
	\begin{abstract}
	This paper is devoted to study different type of BSDE with delayed generator. We first establish an existence and uniqueness result under delayed Lipschitz condition for non homogenous backward stochastic differential equation with delayed generator. Next, existence and uniqueness result for general fully coupled forward backward stochastic differential equation with delayed coefficients has been derived.
	\end{abstract}

	\vspace{.08in}\textbf{MSC}: 34F05; 60H10; 60F10; 60H30 \\
	\vspace{.08in}\textbf{Keywords}: forward-backward Stochastic differential equations; delayed generators
	
	\section{Introduction}
	
	Backward stochastic differential equations (shorter BSDEs) were developed in the early 1990s, by Pardoux and Peng  \cite{pp}. They established  results on the existence and uniqueness of the adapted solutions under Lipschitz condition in the mentioned paper. Since then, BSDEs have been intensively developed both theoretically and in various applications. In  papers \cite{pp2} by Pardoux and Peng and \cite{pp3} by Pardoux, authors gave a probabilistic representation for the solutions of some quasilinear parabolic partial differential equations in terms of solutions of BSDEs, by obtaining a generalization of the well-known Feynman-Kac formula. Furthermore, BSDEs are encountered in many fields of applied mathematics such as finance, stochastic games and optimal control, as well as partial differential equations and homogenization etc (\cite {el1}, \cite{el2},\cite{el3},\cite{ham}).
	
	 There exists an extensive literature on BSDEs, depending on their direction of generalization and their applications. Some authors generalized noise but introducing another independent Brownian motion  in the equation (named backward doubly stochastic differential equations), Lévy processes, further more martingales, processes with memory etc. Each of this generalizations induced some new fields of applications. Those equations which have time dependent coefficients are specially interesting because of their wide application.  We will restrict our selfs to the case of forward-backward equations (shorter FBSDEs) with a special delay generator type which we will describe in the sequel.
	 
	 \medskip
	 
	 In \cite{DI}  Delong and Imkeller introduced backward stochastic differential equations with time delayed generators, where generator at time $t$ can depend on the values of a solution in the past, weighted with a time delay function for instance of the moving average type. They proved existence and uniqueness of a solution for a sufficiently small time horizon or for a sufficiently small Lipschitz constant of a generator, and gave examples of BSDE with time delayed generators that have multiple solutions or that have no solutions. Furthermore, the same authors in \cite{DI1} extended their analysis on  BSDEs with time delayed generators driven by Brownian motions and Poisson random measures, that constitute the two components of a L\'evy process.  In this case also generator can depend on the past values of a solution, by feeding them back into the dynamics with a time lag. For such time delayed BSDEs, they proved the existence and uniqueness of solutions provided they restricted the case on a sufficiently small time horizon or the generator possesses a sufficiently small Lipschitz constant.
	 
	The difference should be made between forward and backward stochastic differential equations (shorter SDEs) with delayed generators and tools for their analysis. Comparing to the backward case which we mentioned,  in the forward framework we refer to papers  \cite{xyh} by  Xu, Yang, Huang  and lecture notes  \cite{muh} by  Salah-Eldin with significant result in the field of processes with memory.
	
	 On the other side,  Hu and Peng introduced FBSDEs in \cite{HP}, and  proved the existence and uniqueness of the solution to forward-backward stochastic differential equations under monotonicity condition for the coefficients. There have been many papers after this result where authors generalizes form of FBSDEs, or extended the range for existence and uniqueness of the solution, and with this generalizations illustrated some new field of applications of those equations. Within  a wide range of results in the field of FBSDEs it is interesting to mention paper of Ma,  Wu,  Zhang, Zhang \cite{Ma} where authors showed well-posedness of the FBSDE in a general non-Markovian framework. This were the first steps of imposing the advantages on the other sides, but also difficulties on the other side, of non-Markovian type of equations.
	 
	 \medskip
	 
	Our paper has two goals. First, we extend work by Delong and Imkeller \cite{DI1} by studying non homogenous BSDE with delayed generator which is BSDE in the form
\begin{eqnarray}
Y(t)&=& \xi+\int^{T}_{t}f(s,Y_s,Z_s)ds-\int_{t}^{T}(g(s,Y_s)+Z(s))dW(s).\label{pl2-3}
\end{eqnarray}
This will enable us to prove the second aim which is to introduce the field of FBSDEs with delayed generators of general form. More precisely, we study the following non homogenous fully coupled forward-backward stochastic differential delayed equations (FBSDDEs in short):
\begin{eqnarray}
X(t)&=&x+\int^t_0 b(r,X_r, Y_r,Z_r)dr+\int^t_0 \sigma(r,X_r, Y_r,Z_r)dW(r),\nonumber\\\label{FBSDDE}\\\nonumber
Y(t)&=& \xi + \int_t^T f(r,X_r, Y_r,Z_r)dr - \int_t^T (g(r,X_r,Y_r)+Z(r)) d W(r)\nonumber
\end{eqnarray}
where $X_{s}=(X(s+u))_{-T\leq u\leq 0}$, $Y_{s}=(Y(s+u))_{-T\leq u\leq 0}$ and $Z_{s}=(Z(s+u))_{-T\leq u\leq 0}$ designed respectively all the past of the processes $X, Y$ and $Z$ until $s$. To our point of view, such study does not exist in literature and makes this a novelty. Even more, our method is different  from that used by Peng and Hu, and allows us to consider only the Lipschitz condition on the generators.

\medskip

The sequel of this paper is organized as follows: In Section 2 preliminary results are given.  Section 3 is dedicated existence and uniqueness theorems which are given in two subsections. Subsection 3.1 is dedicated to non homogenous BSDE with delayed generator, equation which has a general diffusion coefficient - function which is dependent of state process  is added to control process. In Theorem \ref{P1}, we present a result of existence and uniqueness for this type of equations. Subsection 3.2 is devoted to derive two results. First in Theorem \ref{T2}, we derive an existence and uniqueness for FBSDE \eqref{FBSDDE} when $g$ is identically null. Next with this previous result, we provide in Theorem \ref{T1} which establish an existence and uniqueness result for FBSDE \eqref{FBSDDE} in general case. We dedicate Section 4 to possible applications as well to some conclusion remarks. Paper finishes with a list of references.
	
\section{Preliminaries}
	For a strict positive real number $T$, let us consider $(\Omega,\mathcal{F},\P,(\mathcal{F}_t)_{0\leq t\leq T})$ a a filtered probability space, where the filtration $(\mathcal{F}_t)_{0\leq t\leq T}$ is assumed to be complete, right continuous and generated by a $(W(t))_{0\leq t\leq T}$, a one-dimensional Brownian motion.

In order to give what we mean by solution of eq. \eqref{pl2-3}, as well as eq. \eqref{FBSDDE}, let us set the following spaces;
\begin{description}
\item $\bullet$ Let $L_{-T}^2(\mathbb{R})$ denote the space of measurable functions $ z : [-T;0] \rightarrow \mathbb{R} $ satisfying
$$   \int_{-T}^0 \mid z(t) \mid^2 dt < \infty .
$$
\item $\bullet$ Let $L_{-T}^{\infty} (\mathbb{R})$ denote the space of bounded, measurable functions $y: [-T,0] \rightarrow \mathbb{R} $\\
satisfying
$$
\sup\limits_{-T\leq t\leq 0} \mid y(t) \mid^2 <+\infty.
$$

\item $\bullet$ Let $ L^2(\Omega,\mathcal{F}_T,\mathbb{P})$ be the space of $\mathcal{F}_T$-measurable random variables $\xi: \Omega \rightarrow \mathbb{R} $ endowed with the norm $$\|\xi\|_{L^2}^2:=\E(|\xi|^2).$$
\item \item $\bullet$ Let $ \mathcal{S}^2(\R)$ denote the space of all predictable process $\eta$ with values in $\R$ such that $$\E\left(\sup_{0\leq s\leq T}e^{\beta s}|\eta(s)|^2\right)<+\infty.$$
\item $\bullet$ Let $\mathcal{H}^2(\R^n)$ denote the space of all predictable process $\eta$ with values in $\R$ such that $$\E\left(\int_{0}^Te^{\beta s}|\eta(s)|^2ds\right)<+\infty.$$
\end{description}
In all this paper, we will use the following notations; $|.|$ denotes the usual norm in $\R^n$ with it associated Euclidian norm. For $u=(x,y,z)\in\R^n\times\R\times\R^n$,
$$ \|u\|^2:=|x|^2+|y|^2+|z|^2 $$
and 
$$h(t,u):=(f(t,u),b(t,u),\sigma(t,u)).$$

\medskip

We are now able to introduce the formal definition of the solution to the eqs. \eqref{pl2-3}  and  \eqref{FBSDDE}.

\smallskip

\begin{defi}
\begin{itemize}
\item [(i)] A couple of processes $(Y,Z)$ is called an adapted solution of \eqref{pl2-3}, if $(Y,Z)$ belongs to $\mathcal{S}^{2}(\R)\times \mathcal{H}^2(\R^n)$, and satisfies \eqref{pl2-3} $\P$-almost surely.	
\item [(ii)] A triple of processes $(X,Y,Z)$ is called an adapted solution of \eqref{FBSDDE}, if $(X,Y,Z)$ belongs to $\mathcal{S}^2(\R)\times\mathcal{S}^{2}(\R)\times \mathcal{H}^2(\R^n)$, and satisfies \eqref{FBSDDE} $\P$-almost surely.	 
\end{itemize}
\end{defi}

\section{Existence and uniqueness results}

This section is dedicated to the main results of the paper and results are devised in two parts. In the first subsection general BSDE with the delay with the conditions for the generators given by  \cite{DI} is introduces and existence and uniqueness is proved, while in following subsection the existence and uniqueness result  for the FBSDE with delay with appropriate conditions for the generators is introduced and problems related to it proved.

\subsection{Extension of the class of delayed backward stochastic differential equations}
This subsection is devoted to study BSDE \eqref{pl2-3} which is a more general version  of the BSDEs with time delayed generator studied by Delong and Imkeller in \cite{DI}.  The difference is that the BSDE  \eqref{pl2-3} has more general diffusion coefficient, ie it has in addition to control process $Z(\cdot)$ it has a  function which is dependent of state process $Y(\cdot)$, ie function $g(s,Y_s)$. This extended version of the equation respect to diffusion coefficient is named {\it non-homogeneous}, while the equation which has only control process in the diffusion coefficient is named {\it homogeneous} version of the equation.  This type of generalization of diffusion coefficient has been proved by Jankovi\'c, Djordjevi\'c, Jovanovi\'c for backward doubly stochastic differential equations (shorter BDSDEs) under different conditions for the coefficients of the equation in \cite{J1} and \cite{J3}. Also, in  \cite{J2}   Djordjevi\'c and  Jankovi\'c gave the result of this type of extension for the backward stochastic Volterra integral equations (shorter BSVIEs).

\medskip

Our main objective is to derive an existence and uniqueness result of BSDE \eqref{pl2-3}, that extends Theorem 3.1 established in \cite{DI}, under the following assumptions;
 \begin{description}
\item {\bf(H1)} $\xi \in L^2(\Omega,\mathcal{F}_T,\mathbb{P})$.
\item {\bf(H2)} $f: \Omega \times[0, T] \times L_{-T}^{\infty}(\mathbb{R}^n) \times L_{-T}^{2}(\mathbb{R}^{n\times d}) \rightarrow \mathbb{R}^n$ is a product measurable and $ {\bf F} $-adapted function such that for some probability measure  $\alpha$ on $([-T, 0],\mathcal{B}([-T, 0]))$ and any $m_t=\left(y_{t}, z_{t}\right),m'_t=\left(y'_{t}, z'_{t}\right) \in L_{-T}^{\infty}(\mathbb{R}^n) \times L_{-T}^{2}(\mathbb{R}^{n\times d})$, there exist a positive constant $K$ such that following holds:
 \begin{itemize}
 	\item [(i)]$\displaystyle |f\left(t, m_t\right)-f(t, m'_{t})|^{2}\leq K\int_{-T}^{0}\|m(t+u)-m'(t+u)\|^{2}\alpha(du)$ a.s.
 	\item [(ii)] $\displaystyle \mathbb{E}\left[\int_{0}^{T}|f(t, 0,0)|^{2} d t\right]<+\infty$
 	\item [(iii)]\ $f(t,\cdot,\cdot)=0$ for $t<0$
 \end{itemize}
\item {\bf(H3)} $g:\Omega \times[0, T] \times L_{-T}^{\infty}(\mathbb{R}^n) \rightarrow \mathbb{R}^{n\times d}$ is a product measurable and $ {\bf F} $-adapted function such that for some probability measure $\alpha$ on $([-T, 0],\mathcal{B}([-T, 0]))$ and any  $y_{t},y'_t\in L_{-T}^{\infty}(\mathbb{R}) \times L_{-T}^{2}(\mathbb{R})$, there exist a positive constant $K$ such that following holds:
\begin{itemize}
	\item [(i)] $\displaystyle \left|g\left(t, y_{t}\right)-g\left(t, y'_{t}\right)\right|^{2}\leq  K\int_{-T}^{0}|y(t+u)-y'(t+u)|^{2} \alpha(du)$ a.s.,
	\item [(ii)] $\displaystyle \mathbb{E}\left[\int_{0}^{T}|g(t, 0)|^{2} d t\right]<+\infty$,
	\item [(iii)] $g(t, \cdot)=0$ for $t<0$.
\end{itemize}
\end{description}
We conclude by explaining some of assumptions. 
\begin{rem}\label{R1}
\begin{itemize}
\item [(a)] Assumption $(\bf H2)$-$(ii)$ and $(\bf H3)$-$(ii)$ allow us to take $(X(t),Y(t),Z(t))=(X(0),Y(0),0)$ for $t<0$, as a solution of \eqref{pl2-3}.
\item [(b)] The quantity $f(t,0,0)$ in $ (\bf H2)$-$(iii) $ and $g(t,0)$ in $ (\bf H3)$-$(iii) $ should be understood as a value of the generator $f$ (resp. $g$) at $m_t=(0,0)$ (resp. $y_t=0$).
\end{itemize}
\end{rem}

\begin{theo}\label{P1}
Assume ${\bf(H1)-(H3)}$ hold. Let horizon time $T>0$ and Lipschitz constant $K$ satisfy
\begin{eqnarray*}
8KT \max (1,T)<1. 
\end{eqnarray*}
Then delayed BSDE \eqref{pl2-3} has a unique solution.	
\end{theo}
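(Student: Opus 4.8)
The plan is to prove Theorem \ref{P1} by a Banach fixed-point argument on the product space $\mathcal{S}^2(\R)\times\mathcal{H}^2(\R^n)$. Given any pair $(U,V)$ in this space, I freeze the delayed arguments of the coefficients and define $(Y,Z)=\Phi(U,V)$ as follows. By \textbf{(H1)}, \textbf{(H2)}(ii) and the delayed Lipschitz bound \textbf{(H2)}(i), the variable $\xi+\int_0^T f(s,U_s,V_s)\,ds$ is square integrable, so the martingale $M(t)=\E\big[\xi+\int_0^T f(s,U_s,V_s)\,ds\mid\mathcal{F}_t\big]$ admits, by the representation theorem, a unique $\zeta\in\mathcal{H}^2(\R^n)$ with $M(t)=M(0)+\int_0^t\zeta(s)\,dW(s)$. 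Since $g(\cdot,U_\cdot)\in\mathcal{H}^2(\R^n)$ by \textbf{(H3)}, I set $Z:=\zeta-g(\cdot,U_\cdot)$ and
\[
Y(t):=\xi+\int_t^T f(s,U_s,V_s)\,ds-\int_t^T\big(g(s,U_s)+Z(s)\big)\,dW(s).
\]
One checks that $Y(t)=M(t)-\int_0^t f(s,U_s,V_s)\,ds$, so $Y$ is adapted and, by the Burkholder--Davis--Gundy inequality, lies in $\mathcal{S}^2(\R)$; thus $\Phi$ maps the space into itself and its fixed points are exactly the solutions of \eqref{pl2-3}.

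For the contraction I would compare $(Y,Z)=\Phi(U,V)$ and $(Y',Z')=\Phi(U',V')$. Writing $\Delta$ for differences and $\Delta f(s)=f(s,U_s,V_s)-f(s,U'_s,V'_s)$, $\Delta g(s)=g(s,U_s)-g(s,U'_s)$, the pair $(\Delta Y,\Delta Z)$ solves a linear BSDE with null terminal value, and It\^o's formula applied to $|\Delta Y|^2$ gives, after taking expectations,
\[
\E|\Delta Y(t)|^2+\E\int_t^T|\Delta g(s)+\Delta Z(s)|^2\,ds=2\,\E\int_t^T\Delta Y(s)\,\Delta f(s)\,ds.
\]
The identity $\Delta Y(t)=\E\big[\int_t^T\Delta f(s)\,ds\mid\mathcal{F}_t\big]$ together with Cauchy--Schwarz yields $\E|\Delta Y(t)|^2\le(T-t)\,\E\int_t^T|\Delta f|^2\,ds$, whence $\E\int_0^T|\Delta Y|^2\,dt\le T^2\,\E\int_0^T|\Delta f|^2\,dt$. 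The central technical tool is the delay reduction estimate: since $\alpha$ is a probability measure and the differences vanish on $[-T,0)$ (Remark \ref{R1}(a)), Fubini's theorem gives $\E\int_0^T\!\int_{-T}^0|\Delta\phi(s+u)|^2\alpha(du)\,ds\le\E\int_0^T|\Delta\phi(s)|^2\,ds$, where $\Delta\phi$ stands for either $\Delta U$ or $\Delta V$. Combined with \textbf{(H2)}(i) and \textbf{(H3)}(i) this produces $\E\int_0^T|\Delta f|^2\,dt\le K\,\E\int_0^T(|\Delta U|^2+|\Delta V|^2)\,dt$ and $\E\int_0^T|\Delta g|^2\,dt\le K\,\E\int_0^T|\Delta U|^2\,dt$.

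The main obstacle is the $\Delta Z$ estimate, precisely because the diffusion coefficient carries the extra term $g(s,Y_s)$: the martingale integrand is $\Delta g+\Delta Z$ rather than $\Delta Z$, so from $|\Delta Z|^2\le 2|\Delta g+\Delta Z|^2+2|\Delta g|^2$ and the identity above one only obtains
\[
\E\int_0^T|\Delta Z|^2\,dt\le 4TK\,\E\int_0^T(|\Delta U|^2+|\Delta V|^2)\,dt+2K\,\E\int_0^T|\Delta U|^2\,dt,
\]
in which the last term carries no $T$-smallness factor. The feature that rescues the argument is that this term involves only the $Y$-component of the input (since $g$ depends on $Y_s$ alone), whereas the $\Delta Y$-estimate above is itself of order $T^2$. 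Hence the two bounds form a coupled linear system for the pair $\big(\E\int|\Delta Y|^2,\,\E\int|\Delta Z|^2\big)$ with transition matrix having entries of order $T^2K$, $TK$ and $K$; a short computation shows that the hypothesis $8KT\max(1,T)<1$ is exactly what forces the spectral radius of this matrix to be strictly below one, so that $\Phi$ becomes a strict contraction in a suitably weighted norm on $\mathcal{S}^2(\R)\times\mathcal{H}^2(\R^n)$ (equivalently, $\Phi^n$ is a contraction for $n$ large). The Banach fixed-point theorem then delivers a unique fixed point, i.e. a unique solution of \eqref{pl2-3}. I expect the sharp bookkeeping of the constants that produces precisely the threshold $8KT\max(1,T)$, and the correct treatment of the $T$-free $g$-contribution through the coupled system, to be the most delicate part of the proof.
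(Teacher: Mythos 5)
Your strategy is viable but genuinely different from the paper's. The paper does not run a Picard iteration on \eqref{pl2-3} at all: it absorbs the inhomogeneity through the substitution $\overline{Z}(t)=Z(t)+g(t,Y_t)$, rewrites \eqref{pl2-3} as the homogeneous delayed BSDE \eqref{AAA} with generator $\bar{f}(t,y_t,z_t)=f(t,y_t,z_t-g(t,y_t))$, invokes Delong--Imkeller's existence theorem for \eqref{AAA}, and only then proves uniqueness by an a priori estimate on $\E\sup_t|\delta Y(t)|^2+\E\int_0^T|Z'(s)+g(s,Y'_s)-\overline{Z}(s)|^2ds$ --- which is precisely where the factor $8KT\max(1,T)$ is produced. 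Your direct fixed-point argument is more self-contained (it does not lean on the cited theorem), and your key structural observation is sound: the $T$-free contribution $2K\,\E\int|\Delta U|^2$ to the $\Delta Z$ bound is harmless because it only ever multiplies against the $O(KT^2)$ entry coming from the $\Delta Y$ estimate, so the coupled system still has spectral radius $O(KT\max(1,T))$. This is exactly the direct-iteration counterpart of the paper's change of unknown: measuring $Z$ through $Z+g(\cdot,Y_\cdot)$ diagonalizes your $2\times 2$ system. A side benefit of your route is that it tracks explicitly how the Lipschitz constant of $g$ enters; the paper's reduction asserts that $\bar{f}$ satisfies {\bf(H4)} with the \emph{same} constant $K$, which is not literally true after composing $f$ with $z\mapsto z-g(y)$.

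Two points need repair before your argument is complete. First, your delay-reduction inequality $\E\int_0^T\int_{-T}^0|\Delta\phi(s+u)|^2\alpha(du)\,ds\le\E\int_0^T|\Delta\phi(s)|^2ds$ is justified by saying the differences vanish on $[-T,0)$; under the paper's convention (Remark \ref{R1}) this is true for the $Z$-component but not for the $Y$-component, since $Y(t)=Y(0)$ for $t<0$ and hence $\Delta U(s+u)=\Delta U(0)\neq 0$ on the negative range. The inequality then acquires an extra term of order $T\,\E|\Delta U(0)|^2$, which is not controlled by $\E\int_0^T|\Delta U(t)|^2dt$. The paper sidesteps this by estimating the $Y$-component in the supremum norm (see \eqref{eqq9}), and you should do the same; this changes your matrix entries (the column corresponding to $\Delta U$ picks up a factor $T$ from integrating against the sup, which only helps). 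Second, the claim that $8KT\max(1,T)<1$ is ``exactly'' the condition making the spectral radius less than one is asserted, not computed; with the sup-norm bookkeeping the row sums of the resulting matrix can indeed be bounded by $8KT\max(1,T)$ (so the stated threshold is attainable), but this computation, together with the choice of the weighted norm in which $\Phi$ contracts, must actually be carried out to prove the theorem with the stated constant.
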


\begin{proof}
Our method follows the same idea used by Jankovi\'c, \DJ or\dj evi\'c and Jovanovi\'c \cite{J1,J2,J3}. In this fact, let us consider this following BSDE
 \begin{eqnarray}
Y(t)&=& \xi+\int^{T}_{t}\bar{f}(s,Y_s,Z_s)ds-\int_{t}^{T}Z(s)dW(s),\label{AAA}
\end{eqnarray}
where $\bar{f}(t,y_t,z_t)=f(t,y_t,z_t-g(t,y_t))$. 
In view of Assumption $(\bf H2)$, the function $\bar{f}$ satisfies the following assumption
\begin{description}
\item {\bf(H4)}$\bar{f}:\Omega \times[0,T]\times L_{-T}^{\infty}(\mathbb{R})\times L_{-T}^{2}(\mathbb{R}) \rightarrow \mathbb{R}$ is a product measurable and $ {\bf F} $-adapted function such that for some probability measure  $\alpha $ on $([-T, 0],\mathcal{B}([-T, 0]))$ and any $\left(y_{t}, z_{t}\right),\left(y'_{t}, z'_{t}\right) \in L_{-T}^{\infty}(\mathbb{R}^n) \times L_{-T}^{2}(\mathbb{R}^{n\times d})$, there exist a positive constant $K$ such that following holds:
\begin{itemize}
	\item [(i)]$\displaystyle |\bar{f}\left(t, m_{t}\right)-\bar{f}(t, m'_{t})|^{2}\leq K\int_{-T}^{0}\|m(t+u)-m'(t+u)\|^{2}\alpha(du)$ a.s.,
	\item [(ii)] $\displaystyle \mathbb{E}\left[\int_{0}^{T}|\bar{f}(t, 0,0)|^{2} d t\right]<+\infty$
	\item [(iii)] $\bar{f}(t,\cdot,\cdot)=0$ for $t<0$.
\end{itemize}
\end{description}
Therefore, according to Theorem 2.1 in \cite{DI}, there exist a unique process $(\overline{Y},\overline{Z})$, solution of BSDE \eqref{AAA}.
It remain to prove that BSDE \eqref{pl2-3} has a unique solution $(Y,Z)$ such that $Y:=\overline{Y},$ and $Z:=\overline{Z}-g(.,\overline{Y})$. 
In this fact, let consider $(Y',Z')$ an other solution of BSDE \eqref{pl2-3}. Next setting $\delta Y=Y'-\overline{Y}$, it follows from \eqref{pl2-3} and \eqref{AAA} that  
\begin{eqnarray}\label{eqq3}
\delta Y (t)&=& \int^{T}_{t}\left[f(s,Y'_s,Z'_s)-f(s,\overline{Y}_s,\overline{Z}_s-g(s,\overline{Y}_s))\right]ds \nonumber \\
&&-\int_{t}^{T}\left[Z'(s)+g(s,Y'_s)-\overline{Z}(s))\right]dW(s).
\end{eqnarray}
Hence, taking the conditional expectation which respect to $\mathcal{F}_t$ in both side of \eqref{eqq3}, we obtain
\begin{eqnarray*}
|\delta Y(t)|\leq  \E\left[\left|\int_0^T \left[f(s,Y'_s,Z'_s)-f(s,\overline{Y}_s,\overline{Z}_s-g(s,\overline{Y}_s))\right] ds\right| \mid \mathcal{F}_t \right].
\end{eqnarray*}
Moreover by Doob's inequality together with Jensen's inequality, we have
\begin{eqnarray}\label{z}
\E\left[\sup\limits_{0 \leqslant t \leqslant T}|\delta Y(t)|^2\right]
&\leqslant& 4\sup\limits_{0 \leqslant t \leqslant T} \E\left[\E\left(\left|\int_0^T \left[f(s,Y'_s,Z'_s)-f(s,\overline{Y}_s,\overline{Z}_s-g(s,\overline{Y}_s))\right] ds\right|^2 \mid\mathcal{F}_t\right)\right].\nonumber\\
&\leq 4 &\E\left(\left|\int_0^T \left[f(s,Y'_s,Z'_s)-f(s,\overline{Y}_s,\overline{Z}_s-g(s,\overline{Y}_s))\right] ds\right|^2\right).
\end{eqnarray}
Recall again \eqref{eqq3}, we have
\begin{eqnarray*}
&&\left( \int_{t}^{T}\left[Z'(s)+g(s,Y'_s)-\overline{Z}(s))\right]dW(s) \right)^2 \\
& \leq & 2\left|\int_0^T \left[f(s,Y'_s,Z'_s)-f(s,\overline{Y}_s,\overline{Z}_s-g(s,\overline{Y}_s))\right] ds   \right|^2 +2| \delta Y(0)|^2.
\end{eqnarray*}
The expectation in both side of previous inequality together with previous estimate provide
\begin{eqnarray}
&& \E\left(\int_0^T|Z'(s)+g(s,Y'_s)-\overline{Z}(s))|^2ds\right) \nonumber \\
&\leq & 2\E\left( \left|\int_0^T \left[f(s,Y'_s,Z'_s)-f(s,\overline{Y}_s,\overline{Z}_s-g(s,\overline{Y}_s))\right] ds   \right|^2 + |\delta Y(0)|^2\right).\nonumber\\
&\leq & 4\E\left(\left|\int_0^T \left[f(s,Y'_s,Z'_s)-f(s,\overline{Y}_s,\overline{Z}_s-g(s,\overline{Y}_s))\right] ds   \right|^2 \right).\label{j}
\end{eqnarray}
Hence, in virtue of \eqref{z} and \eqref{j}, we get
\begin{eqnarray}\label{F}
&&\E\left[\sup\limits_{0 \leqslant t \leqslant T}|\delta Y(t)|^2\right]+\E\left(\int_0^T|Z'(s)+g(s,Y'_s)-\overline{Z}(s))|^2ds\right)\nonumber\\
&\leq& 8\E\left(\left|\int_0^T \left[f(s,Y'_s,Z'_s)-f(s,\overline{Y}_s,\overline{Z}_s-g(s,\overline{Y}_s))\right] ds   \right|^2 \right).
\end{eqnarray}
But, it follows from the Lipschitz condition $({\bf A2})$ on generator $f$ and Cauchy-Schwarz's inequality that,
\begin{eqnarray}\label{k}
&& \E\left(\left|\int_0^T \left[f(s,Y'_s,Z'_s)-f(s,\overline{Y}_s,\overline{Z}_s-g(s,\overline{Y}_s))\right] ds\right|^2\right) \nonumber\\
&\leqslant & T\E\left(\int_0^T \left|f(s,Y'_s,Z'_s)-f(s,\overline{Y}_s,\overline{Z}_s-g(s,\overline{Y}_s))\right|^2ds     \right) \nonumber \\
&\leq & KT\E\left(\int^{T}_{0} \int_{-T}^{0}|\delta Y(s+u)|^{2} \alpha(d u) ds\right) \nonumber\\ 
&&+ KT\E\left(\int^{T}_{0} \int_{-T}^{0}|Z'(t+u)+g(s+u,Y'(s+u))-\overline{Z}(s+u)|^{2} \alpha(du)ds\right).
\end{eqnarray}
Since $Y(s)=Y(0)$ and $Z(s)=0=g(s,.)$ for all $s<0$, we obtain respectively by Fubini's theorem and the change of variable
\begin{eqnarray}\label{eqq9}
 \E \left( \int^{T}_{0} \int_{-T}^{0}|\delta Y(s+u)|^{2} \alpha(d u) ds\right) 
& \leq &  \E  \left(\int^{0}_{-T} \int_{u}^{T+u}|\delta Y(v)|^{2} dv\alpha ( du) \right) \nonumber \\
& \leq & T \E \left(  \sup\limits_{0 \leqslant v \leqslant T}\mid \delta Y(v)\mid^2 \right)
\end{eqnarray}
and
\begin{eqnarray}\label{eqq10}
&& \E\left(\int^{T}_{0} \int_{-T}^{0}|Z'(t+u)+g(s+u,Y'(s+u))-\overline{Z}(s+u)|^{2} \alpha(du)ds\right) \nonumber\\
& \leq &  \E  \left( \int^{0}_{-T} \int_{u}^{T+u} |Z'(v)+g(v,Y'(v))-\overline{Z}(v)|^{2}dv \alpha (du)  \right) \nonumber\\
& \leq & \E  \left(  \int_{0}^{T} |Z'(v)+g(v,Y'(v))-\overline{Z}(v)|^{2}dv  \right).
\end{eqnarray}
Putting \eqref{eqq9}, \eqref{eqq10} and \eqref{k} in \eqref{F}, we have
\begin{eqnarray*}
&& \E\left(  \sup\limits_{0 \leqslant t \leqslant T}\mid \delta Y(t)\mid^2 + \int_0^T|Z'(s)+g(s,Y'_s)-\overline{Z}(s)|^2 ds \right)\\
& \leq & 8KT \max (1,T)  \E\left(  \sup\limits_{0 \leqslant t \leqslant T}\mid \delta Y(t)\mid^2 +  \int_{0}^{T} |Z'(s)+g(s,Y'(s))-\overline{Z}(s)|^{2}ds\right).
\end{eqnarray*}
Since $ 8KT \max (1,T)< 1 $,
\begin{eqnarray*}
\E\left(  \sup\limits_{0 \leqslant t \leqslant T}\mid \delta Y(t)\mid^2 +  \int_{0}^{T} |Z'(s)+g(s,Y'(s))-\overline{Z}(s)|^{2}ds\right)= 0.
\end{eqnarray*}
Thus
$$Y'(t)=\overline{Y}(t) \quad  a.s.,  \qquad\qquad  Z'(t)=\overline{Z}(t)-g(t,\overline{Y}_t)\quad a.s. \qquad t\in[0,T].$$
This completes the proof.
\end{proof}
\subsection{Delayed forward backward stochastic differential equations}
This subsection is devoted to give the second main result of our paper, which is the existence and uniqueness of the solution to FBSDDEs of the form \eqref{FBSDDE}. Our method differ to one applied by Hu and Peng in \cite{HP}. Here we have applied It\^o's formula to $|X|^2$ and $|Y|^2$ rather than $\langle X,Y\rangle $ which permit us to relax some of their assumptions. Indeed, we work under the following classical assumptions.
\begin{description}

\item {(\bf A1)} $\xi \in L^2(\Omega,\mathcal{F}_T,\mathbb{P})$, 
\item {(\bf A2)} $ \Phi: \Omega \times [0,T]\times L_{-T}^2 (\mathbb{R})\times  L_{-T}^{\infty} (\mathbb{R} ) \times  L_{-T}^2 (\mathbb{R}^n ) \rightarrow \mathbb{R} $ is a product measurable and $ {\bf F} $-adapted function such that for some  probability measure $\alpha$ defined on $([-T,0],\mathcal{B}([-T,0]))$ and  any $ u_t=(x_t,y_t,z_{t}),u'_t=(x'_t,y'_t,z'_{t}) \in  L_{-T}^2 (\mathbb{R})\times L_{-T}^{\infty} (\mathbb{R} ) \times  L_{-T}^2 (\mathbb{R}^n)$,  there exist two positive constant $K$ such that following holds:
\begin{itemize}
	\item [(i)]
$\displaystyle
\mid \Phi(t,u_t) - \Phi(t,u'_t )\mid^2 
\leq  K \int_{-T}^0  \| u(t+v) - u'(t+v) \|^2  \alpha (dv)$ a.s.\ ,
\item [(ii)]For $t<0,\; \; \Phi(t,u_t)= 0 $,
\item [(iii)]$\displaystyle \mathbb{E} \left[ \int_{0}^T \vert \Phi(t,0) \vert^2 dt\right]  <  +\infty$, 
\end{itemize}
where $\Phi=b,\sigma,f$.
\item \item {(\bf A3)} $ g: \Omega \times [0,T]\times L_{-T}^2 (\mathbb{R})\times  L_{-T}^{\infty} (\mathbb{R} ) \rightarrow \mathbb{R} $ is a product measurable and $ {\bf F} $-adapted function such that for some  probability measure $\alpha$ defined on $([-T,0],\mathcal{B}([-T,0]))$ and  any $ (x_t,y_t),(x'_t,y'_t) \in  L_{-T}^2 (\mathbb{R})\times L_{-T}^{\infty} (\mathbb{R} )$,  there exist positive constant $K$ such that following holds:
\begin{itemize}
	\item [(i)]
$\displaystyle
\mid g(t,x_t,y_t) - g(t,x'_t,y'_t)\mid^2 
\leq  K \int_{-T}^0  (| x(t+u) - x'(t+u)|^2+|y(t+u)-y'(t+u)|^2) \alpha (du)$ a.s.\ ,
\item [(ii)]For $t<0,\; \; g(t,x_t,y_t)= 0 $,
\item [(iii)]$\displaystyle \mathbb{E} \left[ \int_{0}^T \vert g(t,0) \vert^2 dt\right]<  +\infty$, 
\end{itemize}
\end{description}

Next remark is in accordance with Remark \ref{R1}.
\begin{rem}\label{R1}
\begin{itemize}
\item [(a)] Assumption $(\bf A2)$-$(ii)$ allows us to take $(X(t),Y(t),Z(t))=(X(0),Y(0),0)$ for $t<0$, as a solution of \eqref{FBSDDE}.
\item [(b)] The quantity $\phi(t,0)$ in $ (\bf A2)$-$(iii) $ and $g(t,0)$ in $ (\bf A3)$-$(iii) $ should be understood respectively as a value of the generator $\phi$ at $u_t=(0,0,0)$ and $g$ at $(x_t,y_t)=(0,0)$.
\end{itemize}
\end{rem} 
\begin{theo}\label{T1}
Let assumptions $({\bf A1})$-$({\bf A3})$ hold. Let horizon time $T>0$ and Lipschitz constant $K$ satisfy
\begin{eqnarray*}
 24 K e\max(1,T^2)<1. 
\end{eqnarray*}
 Then there exists a unique adapted solution $(X, Y, Z)$ for delayed FBSDEs \eqref{FBSDDE}.
\end{theo}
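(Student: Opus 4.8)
The plan is to reduce the general equation \eqref{FBSDDE} to the homogeneous case $g\equiv 0$, in exactly the spirit of the proof of Theorem \ref{P1}, and then to invoke the homogeneous result (Theorem \ref{T2}). First I would introduce the shifted control $\bar Z(s):=Z(s)+g(s,X_s,Y_s)$, so that the martingale term $\int_t^T(g(r,X_r,Y_r)+Z(r))\,dW(r)$ collapses to $\int_t^T\bar Z(r)\,dW(r)$. Substituting $Z(r+u)=\bar Z(r+u)-g(r+u,X_{r+u},Y_{r+u})$ into every coefficient, I set
\[
\bar b(t,x_t,y_t,\bar z_t):=b\bigl(t,x_t,y_t,(\bar z(t+u)-g(t+u,x_{t+u},y_{t+u}))_{u}\bigr),
\]
and define $\bar\sigma,\bar f$ analogously. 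With these coefficients $(X,Y,\bar Z)$ solves a homogeneous FBSDDE (null diffusion correction, $g\equiv 0$), which has a unique solution by Theorem \ref{T2}; putting $Z:=\bar Z-g(\cdot,X,Y)$ then yields a solution of \eqref{FBSDDE}.

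The first technical step is to verify that $\bar b,\bar\sigma,\bar f$ still satisfy $({\bf A2})$. This follows from the Lipschitz bounds $({\bf A2})$-$(i)$ and $({\bf A3})$-$(i)$ together with $|\delta\bar z-\delta g|^2\le 2|\delta\bar z|^2+2|\delta g|^2$: the composition inflates the Lipschitz constant by a fixed factor which, combined with the constants of the homogeneous estimate, accounts for the numerical constant $24$ in the hypothesis, while $({\bf A2})$-$(ii),(iii)$ and $({\bf A3})$-$(ii),(iii)$ give the integrability and the vanishing for $t<0$. I would then check that the inflated Lipschitz constant meets the smallness condition of Theorem \ref{T2}, and that $(X,Y,Z)\mapsto(X,Y,Z+g(\cdot,X,Y))$ is a bijection between the solution sets, so that uniqueness transfers back: any solution $(X',Y',Z')$ of \eqref{FBSDDE} produces a solution of the homogeneous system, which must coincide with $(X,Y,\bar Z)$.

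For completeness I would also record the homogeneous estimate that drives Theorem \ref{T2}, since it is the engine of the argument. On $\mathcal{S}^2(\R)\times\mathcal{S}^2(\R)\times\mathcal{H}^2(\R^n)$ with the $\beta$-weighted norms, I would use the Picard map sending $(x,y,z)$ to $(X,Y,Z)$, where $X$ is given by the explicit forward integral with the inputs frozen in $\bar b,\bar\sigma$, and $(Y,Z)$ is obtained from $Y(t)=\E[\xi+\int_t^T\bar f(r,x_r,y_r,z_r)\,dr\mid\mathcal F_t]$ and the martingale representation theorem. Applying It\^o's formula to $|\delta X|^2$ and to $|\delta Y|^2$ separately (rather than to $\langle X,Y\rangle$), Doob's maximal and Burkholder--Davis--Gundy inequalities, Cauchy--Schwarz, and the change of variables $\int_0^T\!\int_{-T}^0\varphi(r+v)\,\alpha(dv)\,dr\le\int_{-T}^T\varphi(s)\,ds$ already used in \eqref{eqq9}--\eqref{eqq10}, leads to a contraction whose constant has the form $24Ke\max(1,T^2)$; the factor $e$ arises from comparing $e^{\beta(r+v)}$ with $e^{\beta s}$ over the delay window and the choice of $\beta$. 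Under the stated hypothesis this constant is $<1$, and Banach's fixed point theorem gives the unique solution.

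The main obstacle I anticipate is closing the coupled estimate: since $X$ enters $\bar f$ (hence $Y,Z$) while $(Y,Z)$ enter $\bar b,\bar\sigma$ (hence $X$), the forward and backward bounds feed into one another, and one must track the precise dependence on $T$ (through $\max(1,T^2)$) and on the delay measure $\alpha$ so that both halves sum to a single factor strictly below $1$. Keeping this combined constant down to $24$, rather than something larger, through the weighted delay integrals is the delicate point; by contrast the measurability, adaptedness and integrability checks for the transformed coefficients are routine.
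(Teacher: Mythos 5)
Your proposal follows essentially the same route as the paper: shift the control to $\bar Z = Z + g(\cdot,X,Y)$, define $\widetilde{\Phi}(t,x_t,y_t,z_t)=\Phi(t,x_t,y_t,z_t-g(t,x_t,y_t))$ for $\Phi=b,\sigma,f$, verify the transformed coefficients satisfy the Lipschitz-type assumption, invoke Theorem \ref{T2} for the homogeneous system, and transfer existence and uniqueness back through the bijection $Z\mapsto Z+g$. If anything, you are more explicit than the paper about the fact that the composition inflates the Lipschitz constant (via $|\delta\bar z-\delta g|^2\le 2|\delta\bar z|^2+2|\delta g|^2$), a point the paper's own verification of $({\bf A4})$ passes over silently.
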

To prove this results, let us first establish the existence and uniqueness result for particular FBSDE \eqref{FBSDDE} ($g$ is identically null) in the following form
\begin{eqnarray}
X(t)&=&x+\int^t_0 b(r,X_r, Y_r,Z_r)dr+\int^t_0 \sigma(r,X_r, Y_r,Z_r)dW(r),\nonumber\\\label{FBSDDEbis}\\\nonumber
Y(t)&=& \xi + \int_t^T f(r,X_r, Y_r,Z_r)dr - \int_t^T Z(r) d W(r)
\end{eqnarray}
\begin{theo}\label{T2}
Assume $({\bf A1})$-$({\bf A2})$ hold. Let horizon time $T>0$ and Lipschitz constant $K$ satisfy
\begin{eqnarray*}
 24 K e\max(1,T^2)<1. 
\end{eqnarray*}
 Then there exists a unique adapted solution $(X, Y, Z)$ for delayed FBSDEs \eqref{FBSDDEbis}.
\end{theo}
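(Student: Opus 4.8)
The plan is to build the solution by a Banach fixed point argument on a decoupling map, reusing the weighted-norm and change-of-variables technique already developed in the proof of Theorem \ref{P1}. Fix a parameter $\beta>0$ (to be calibrated at the end) and work on the product space $\mathcal{M}:=\mathcal{S}^2(\R)\times\mathcal{S}^2(\R)\times\mathcal{H}^2(\R^n)$ endowed with the $\beta$-weighted norms. Given an input triple $(\bar X,\bar Y,\bar Z)\in\mathcal{M}$, I would define $\Psi(\bar X,\bar Y,\bar Z)=(X,Y,Z)$ by first setting
\begin{equation*}
X(t)=x+\int_0^t b(r,\bar X_r,\bar Y_r,\bar Z_r)\,dr+\int_0^t \sigma(r,\bar X_r,\bar Y_r,\bar Z_r)\,dW(r),
\end{equation*}
and then letting $(Y,Z)$ solve the backward equation
\begin{equation*}
Y(t)=\xi+\int_t^T f(r,\bar X_r,\bar Y_r,\bar Z_r)\,dr-\int_t^T Z(r)\,dW(r).
\end{equation*}
Because the coefficients are frozen at the input, the generator no longer depends on the unknown, so $(Y,Z)$ is obtained directly from the martingale representation of $\E\big[\xi+\int_0^T f(r,\bar X_r,\bar Y_r,\bar Z_r)\,dr\mid\mathcal{F}_t\big]$. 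Assumptions $(\textbf{A2})$(ii)--(iii) ensure $\Psi$ maps $\mathcal{M}$ into itself, and any fixed point of $\Psi$ is exactly an adapted solution of \eqref{FBSDDEbis}.

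Next I would estimate the forward component. For two inputs, writing $\delta X=X^1-X^2$ and $\delta\bar U=(\delta\bar X,\delta\bar Y,\delta\bar Z)$, I apply It\^o's formula to $e^{\beta t}|\delta X(t)|^2$ together with the Burkholder--Davis--Gundy inequality (for the running supremum) and Cauchy--Schwarz, followed by the Lipschitz bound $(\textbf{A2})$(i) on $b$ and $\sigma$. The resulting delayed terms $\int_{-T}^0\|\delta\bar U(r+v)\|^2\alpha(dv)$ are treated exactly as in \eqref{eqq9}--\eqref{eqq10}, by Fubini's theorem and the change of variables $v=r+u$, using that the processes are frozen before time $0$; this converts them into genuine time integrals of $\|\delta\bar U\|^2=|\delta\bar X|^2+|\delta\bar Y|^2+|\delta\bar Z|^2$, at the price of a factor $T$ for the $\mathcal{S}^2$-type contributions. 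This bounds $\E[\sup_{0\le t\le T}e^{\beta t}|\delta X(t)|^2]$ by a constant times $K\max(1,T)$ times the weighted norm of the input difference.

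Then I carry out the analogous computation on the backward component: applying It\^o to $e^{\beta t}|\delta Y(t)|^2$ and Doob's inequality controls simultaneously $\E[\sup_{0\le t\le T}e^{\beta t}|\delta Y(t)|^2]$ and $\E[\int_0^T e^{\beta r}|\delta Z(r)|^2\,dr]$ by $K\max(1,T)$ times the input-difference norm, again via $(\textbf{A2})$(i) and the same Fubini/change-of-variables device. Adding the forward and backward estimates, bounding the exponential weights within $[1,e]$ through the choice $\beta=1/T$, and collecting the three components $(X,Y,Z)$ yields
\begin{equation*}
\|\Psi(\bar X^1,\bar Y^1,\bar Z^1)-\Psi(\bar X^2,\bar Y^2,\bar Z^2)\|_{\mathcal{M}}^2\le 24Ke\max(1,T^2)\,\|(\delta\bar X,\delta\bar Y,\delta\bar Z)\|_{\mathcal{M}}^2.
\end{equation*}
Since $24Ke\max(1,T^2)<1$, the map $\Psi$ is a contraction, and the Banach fixed point theorem delivers a unique fixed point, which is the unique adapted solution of \eqref{FBSDDEbis}.

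I expect the principal obstacle to be the bookkeeping of constants rather than the analysis itself: one must track the contributions of $X$, $Y$ and $Z$ simultaneously through both the forward and backward estimates, keep control of the Burkholder--Davis--Gundy and Doob constants, and account for the two independent sources of a factor $T$ (one from Cauchy--Schwarz applied to the drift integrals, one from the Fubini conversion of the delayed supremum-type terms), all while calibrating $\beta$ so that the weight collapses to the factor $e$. The delay structure is routine once the change-of-variables argument from the proof of Theorem \ref{P1} is reused; and the usual difficulty of fully coupled systems, namely the coupling between the forward and backward dynamics, is circumvented here by freezing all coefficients at the input, so that the single contraction constant $24Ke\max(1,T^2)$ can be assembled cleanly.
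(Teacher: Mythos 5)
Your proposal is correct and takes essentially the same route as the paper: the paper runs the Picard iteration $U^{n}=\Psi(U^{n-1})$ for exactly the decoupling map $\Psi$ you define (freeze the coefficients, solve the forward SDE, then the backward equation by martingale representation), and derives the same contraction constant $24Ke\max(1,T^{2})$ via It\^o's formula in the $\beta$-weighted norm with $\beta=1/T$, Doob/Cauchy--Schwarz, and the Fubini change-of-variables treatment of the delay. The only cosmetic difference is that you invoke the Banach fixed point theorem directly while the paper phrases the same estimate as a Cauchy-sequence argument on the iterates.
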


\begin{proof} This proof is subdivided in two parts. 

\bigskip

{\bf Step 1:} {\it Existence.}\\

Let set $U^0=(X^0,Y^0,Z^0)=(0,0,0)$ and consider $U^{n}=(X^{n}, Y^{n},Z^{n})$ defined recursively as follows
\begin{eqnarray}\label{K1}
 X^n(t) &=&  x+\int^t_0 b (s,U^{n-1}_s) ds+ \int^t_0\sigma (s,U^{n-1}_s) dW(s),\nonumber \\\\
Y^n(t) &=&  \xi + \int_t^T f (s,U^{n-1}_s)ds - \int_t^T Z^n(s) d W(s).\nonumber
\end{eqnarray}

Setting $$\bar{\phi }^{n+1} = \phi^{n+1} -\phi^n,$$ for $ \phi $ equal to $ X ,Y,Z $ and $$\bar{\Phi} (s)=\Phi (s,X^{n}_s, Y^{n}_s,Z^{n}_s) -\Phi (s,X^{n-1}_s, Y^{n-1}_s,Z^{n-1}_s),$$ for $ \Phi $ equal to $ b ,\sigma,f $, we have
\begin{eqnarray}\label{K2}
&&\bar{X}^{n+1} (t) = \int^t_0  \bar{b} (s)  ds + \int^t_0  \bar{\sigma } (s)  dW(s)\nonumber \\\\
&&\bar{Y}^{n+1} (t) = \int_t^T  \bar{f} (s) ds - \int_t^T \bar{Z}^{n+1}(s)d W(s).\nonumber
\end{eqnarray}
Applying It\^o's formula to $ e^{\frac{\beta}{2}t} \bar{X}^{n+1}(t)$ and  then taking conditional expectation which respect $\mathcal{F}_t$ , we have

\begin{eqnarray*}
e^{\frac{\beta}{2}t} \mid  \bar{X}^{n+1} (t) \mid \leq  \E \left(\frac{\beta}{2} \int_0^t e^{\frac{\beta}{2}s}\mid  \bar{X}^{n+1} (s) \mid ds + \int_0^t e^{\frac{\beta}{2}s}\mid \bar{b} (s)\mid ds +\int_0^te^{\frac{\beta}{2}s}\bar{\sigma}(s)dW(s)\mid \mathcal{F}_t \right) 
\end{eqnarray*}
By Doob's martingale inequality and Cauchy-Schwarz's inequality we
\begin{eqnarray}\label{Di1}
&&\E \left( \sup\limits_{0 \leq t \leqslant T} e^{\beta t} \mid \bar{X}^{n+1} (t) \mid^2 \right)\nonumber\\
&\leq &
\frac{\beta^2}{2} T \int_0^T \E \left(   e^{\beta s} \mid \bar{X}^{n+1} (s) \mid^2 \right)ds + 4\E \left(T \int_0^T e^{\beta s}\mid \bar{b} (s)\mid^2 ds+\int_0^te^{\beta s}|\bar{\sigma}(s)|^2 ds\right)\nonumber\\
&\leq & \frac{\beta^2}{2} T^2\E \left( \sup_{0\leq t\leq T}  e^{\beta t} \mid \bar{X}^{n+1} (t) \mid^2\right) + 4\E \left(T\int_0^T e^{\beta s}\mid \bar{b} (s)\mid^2 ds +\int_0^te^{\beta s}|\bar{\sigma}(s)|^2 ds\right).\nonumber\\
\end{eqnarray}
Hence we have
\begin{eqnarray}\label{Di3}
\E \left( \sup\limits_{0 \leq t \leqslant T} e^{\beta t} \mid \bar{X}^{n+1} (t) \mid^2 \right)
&\leq &4\left(1-\frac{\beta^2}{2}T^2\right)^{-1}\max(1,T)\E\left(\int_0^T e^{\beta s}\mid \bar{b} (s)\mid^2 ds+\int_0^Te^{\beta s}|\bar{\sigma}(s)|^2 ds\right).\nonumber\\
\end{eqnarray}
Let us now treat the backward SDE. For this by applying again It\^o's formula to $ e^{\beta t} \mid \bar{Y}^{n+1} (t) \mid^2$, we have 
\begin{eqnarray*}
&&\E \left(  e^{\beta t} \mid \bar{Y}^{n+1} (t) \mid^2 + \beta  \int_t^T e^{\beta s} \mid \bar{Y}^{n+1} (s) \mid^2 ds + \int_t^T e^{\beta s} \mid \bar{Z}^{n+1} (s) \mid^2 ds \right)\nonumber\\
&&= 2 \E \int_t^T e^{\beta s} \bar{Y}^{n+1} (s)\bar{f} (s) ds.
\end{eqnarray*}

Since it follows from Young's inequality there a constant $\beta$ such that
\begin{eqnarray*}
2\bar{Y}^{n+1} (s)\bar{f} (s)\leq \beta |\bar{Y}^{n+1} (s)|^2+\frac{1}{\beta}|\bar{f} (s)|^2,	
\end{eqnarray*}
we obtain
\begin{eqnarray}\label{Di2}
\E \left(  e^{\beta t} \mid \bar{Y}^{n+1} (t) \mid^2  + \int_t^T e^{\beta s} \mid \bar{Z}^{n+1} (s) \mid^2 ds \right)&=& \frac{1}{\beta}\E \int_t^T|\bar{f} (s)|^2 ds.
\end{eqnarray} 

On the other hand, applying again It\^o's formula to $e^{\frac{\beta}{2}t} \bar{Y}^{n+1}(t)$ and conditional expectation which respect $\mathcal{F}_t$ respectively, we have
\begin{eqnarray*}
e^{\frac{\beta}{2}t}\bar{Y}^{n+1} (t) = \E \left(\int_t^T  e^{\frac{\beta}{2}s}\bar{f} (s) ds \mid \mathcal{F}_t \right).
\end{eqnarray*}

Using again Doob's martingale inequality and Cauchy-Schwarz's inequality  we obtain
\begin{eqnarray}\label{DI4}
\E \left( \sup\limits_{0 \leq t \leqslant T} e^{\beta t} \mid \bar{Y}^{n+1} (t) \mid^2 \right) \leq T \E \left( \int_0^T e^{\beta s}\mid \bar{f} (s)\mid^2 ds  \right)
\end{eqnarray}
Next, it follows from \eqref{DI4},\eqref{Di2} and \eqref{Di3} that
\begin{eqnarray*}\label{DI5}
&&\E \left[ \sup\limits_{0 \leq t \leqslant T}e^{\beta t}\mid \bar{X}^{n+1} (t) \mid^2 + \sup\limits_{0 \leq t \leqslant T} e^{\beta t}\mid \bar{Y}^{n+1} (t)\mid^2 + \int_0^T e^{\beta s}\mid \bar{Z}^{n+1} (s)\mid^2 ds \right]\nonumber\\
&\leq & \mathbb{E} \left[4\left(1-\frac{\beta^2}{2}T^2\right)^{-1}\max(1,T)\left(\int^T_0 e^{\beta s}|\bar{b} (s)|^2 ds+\int^T_0 e^{\beta s}|\bar{\sigma} (s)|^2 ds\right)\right.\\
&&\left.+\left(T+\frac{1}{\beta}\right)\int_0^Te^{\beta s}| \bar{f} (s)|^2 ds \right].\nonumber\\   
\end{eqnarray*}
For $\beta=\frac{1}{T}$, we have
\begin{eqnarray}
\hskip -1.6cm&&\E \left[ \sup\limits_{0 \leq t \leqslant T}\mid e^{\beta t}\bar{X}^{n+1} (t) \mid^2 + \sup\limits_{0 \leq t \leqslant T} e^{\beta t}\mid \bar{Y}^{n+1} (t)\mid^2 + \int_0^Te^{\beta s}\mid \bar{Z}^{n+1} (s)\mid^2 ds \right]\nonumber\\
\hskip -1.2cm&\leq & 8\max(1,T)\mathbb{E} \left[\int^T_0 e^{\beta s}|\bar{b} (s)|^2 ds+\int^T_0 e^{\beta s}|\bar{\sigma} (s)|^2 ds+\int_0^T e^{\beta s}| \bar{f} (s)|^2 ds \right].\label{DI5}  
\end{eqnarray}

It remains to estimate the second side of the inequality \eqref{DI5}. For this instance, it follows from the Lipschitz condition $({\bf A2})$ on $b$, $\sigma$ and $f$ that,
\begin{eqnarray*}
&&\mathbb{E} \left[  \int^T_0 e^{\beta s}|\bar{\Phi}(s)|^2 ds  \right]\\
&\leq & K \E \left[ \int_0^T \int_{-T}^0 e^{\beta s}\left( \mid \bar{X}^n (s+u) \mid^2 + \mid \bar{Y}^n (s+u) \mid^2 + \mid \bar{Z}^n (s+u) \mid^2 \right)\alpha (du) ds \right].
\end{eqnarray*}
Since for each $n\geq 1$, $X^n(s) = 0,\; Y^n(s)=Y^n(0)$ and $ Z^n(s)=0$ for all $ s<0 $, we obtain respectively by Fubini's theorem and the change of variable
\begin{eqnarray*}
&&\mathbb{E} \left[\int^T_0 e^{\beta t}|\bar{\Phi} (s)|^2 ds\right] \\
&\leq &  K \E \left[  \int_{-T}^0 e^{-\beta u}\left(\int_u^{T+u}e^{\beta v}\left( \mid \bar{X}^n (v) \mid^2 + \mid \bar{Y}^n (v) \mid^2 + \mid \bar{Z}^n (v) \mid^2 \right) dv\right)\alpha (du)  \right]\\
&\leq & K e\max(1,T)\E \left[\sup\limits_{0 \leq t \leqslant T}e^{\beta t}\mid \bar{X}^n (t) \mid^2 + \sup\limits_{0 \leq t \leqslant T}e^{\beta t}\mid \bar{Y}^n (t) \mid^2 + \int_0^T e^{\beta s}\mid \bar{Z}^n (s) \mid^2 ds \right].
\end{eqnarray*}
Consequently, it follow from the previous inequality together with \eqref{DI5} that
\begin{eqnarray*}
&&\E \left[ \sup\limits_{0 \leq t \leqslant T}e^{\beta t}\mid \bar{X}^{n+1} (t) \mid^2 + \sup\limits_{0 \leq t \leqslant T} e^{\beta t}\mid \bar{Y}^{n+1} (t)\mid^2 + \int_0^T e^{\beta s}\mid \bar{Z}^{n+1} (s)\mid^2 ds \right]  \\
&\leq & 24 K e\max(1,T^2)\E \left[\sup\limits_{0 \leq t \leqslant T}e^{\beta t}\mid \bar{X}^n (t) \mid^2 +\sup\limits_{0 \leq t \leqslant T}e^{\beta t}\mid\bar{Y}^n (t) \mid^2+\int_0^T e^{\beta s}\mid \bar{Z}^n (s) \mid^2 ds \right].
\end{eqnarray*}
Finally, by iterative argument, it not difficult to derive
\begin{eqnarray*}
&&\E \left[ \sup\limits_{0 \leq t \leqslant T}\mid e^{\beta t}\bar{X}^{n+1} (t) \mid^2 + \sup\limits_{0 \leq t \leqslant T} e^{\beta t}\mid \bar{Y}^{n+1} (t)\mid^2 + \int_0^T e^{\beta s}\mid \bar{Z}^{n+1} (s)\mid^2 ds \right]  \\
&&\leq \left[ 24 K e\max(1,T^2)\right]^n  \E \left[   \sup\limits_{0 \leq t \leqslant T}\mid e^{\beta t}\bar{X}^1 (t) \mid^2 +   \sup\limits_{0 \leq t \leqslant T}e^{\beta t}\mid \bar{Y}^1 (t) \mid^2 + \int_0^T e^{\beta s}\mid \bar{Z}^1 (s) \mid^2 ds \right].
\end{eqnarray*}
Since $  24 K e\max(1,T^2)< 1, \; (X^n,Y^n,Z^n)_{n\geq 1}$ is a Cauchy sequence in the Banach space $\mathcal{S}^2 (\R) \times \mathcal{S}^2 (\R)\times \mathcal{H}^2 (\R)$. It is then easy to conclude that $\displaystyle(X,Y,Z)=\lim_{n\rightarrow +\infty}(X^n,Y^n,Z^n)$ solves \eqref{FBSDDE}.

\bigskip

{\bf Step 2:} {\it Uniqueness.}

\bigskip

Let define $U=(X,Y,Z) $ and $U'=(X',Y',Z') $ be two solutions of equation \eqref{FBSDDEbis}. Let us set $\delta \phi=\phi-\phi'$, for $\phi=X,Y,Z$. Then $(\delta X, \delta Y,\delta Z)$ satisfies
\begin{eqnarray*}
\delta X (t) &=& \int^t_0  \bar{b} (s)ds+\int^t_0  \bar{\sigma} (s)dW(s)\\
\delta Y (t) &=& \int_t^T  \bar{f} (s)ds+-\int_t^T  \delta Z (s)dW(s),
\end{eqnarray*}
where $\bar{\Phi}(s)=\Phi(s,X_s,Y_s,Z_s)- \Phi(s,X'_s,Y'_s,Z'_s)$ for $\Phi=b, \sigma$ and $f$. With the similar steps used in existence part, we obtain
\begin{eqnarray*}
&&\E \left[ \sup\limits_{0 \leq t \leqslant T}e^{\beta t}\mid \delta{X} (t) \mid^2 + \sup\limits_{0 \leq t \leqslant T}e^{\beta t}\mid \delta{Y}(t)\mid^2 + \int_0^T e^{\beta s}\mid \delta{Z}(s)\mid^2 ds \right]\nonumber\\
&\leq &  24 K e\max(1,T^2)\E \left[\sup\limits_{0 \leq t \leqslant T}e^{\beta t}\mid \delta{X} (t) \mid^2 +\sup\limits_{0 \leq t \leqslant T}e^{\beta t}\mid\delta{Y}(t) \mid^2+\int_0^T e^{\beta s}\mid \delta{Z}(s) \mid^2 ds \right].  
\end{eqnarray*}
In the fact that $24 Ke\max(1,T^2)<1$ we have
\begin{eqnarray*}
\E \left[ \sup\limits_{0 \leq t \leqslant T}e^{\beta t}\mid \delta{X} (t) \mid^2 + \sup\limits_{0 \leq t \leqslant T} e^{\beta t}\mid \delta{Y}(t)\mid^2 + \int_0^Te^{\beta s}\mid \delta{Z}(s)\mid^2 ds \right]
&\leq &  0,  
\end{eqnarray*}
which implies that 
$X=X',\; Y=Y'$ and $Z=Z'$.
\end{proof}
{\bf Proof of Theorem \ref{T1}}\\
Since the proof 
\begin{eqnarray}
X(t)&=&x+\int^t_0 \widetilde{b}(r,X_r, Y_r,Z_r)dr+\int^t_0 \widetilde{\sigma}(r,X_r, Y_r,Z_r)dW(r),\nonumber\\\label{FBSDDE2}\\\nonumber
Y(t)&=& \xi + \int_t^T \widetilde{f}(r,X_r, Y_r,Z_r)dr - \int_t^T Z(r) d W(r),
\end{eqnarray}
where $\widetilde{\Phi}(t,x_t,y_t,z_t)=\Phi(t,x_t,y_t,z_t-g(t,x_t,y_t))$, with $\Phi=b,\sigma, f$.

According to Assumption $(\bf A2)$ and $(\bf A3)$, the function $\bar{\Phi}$ satisfies the following assumption
\begin{description}
\item {\bf(A4)}  $\widetilde{\Phi}: \Omega \times[0, T] \times L_{-T}^{\infty}(\mathbb{R}) \times L_{-T}^{2}(\mathbb{R}) \rightarrow \mathbb{R}$ is a product measurable and $ {\bf F} $-adapted function such that for some probability measure  $\alpha$ on $([-T, 0],\mathcal{B}([-T, 0]))$ and for any $u(t)=\left(x_t,y_{t}, z_{t}\right),u'(t)=\left(x'_t,y'_{t}, z'_{t}\right) \in L_{-T}^{\infty}(\mathbb{R}^n)\times L_{-T}^{\infty}(\mathbb{R}^n) \times L_{-T}^{2}(\mathbb{R}^{n\times d})$ there exists a positive constant $K$ such that following holds:
\begin{itemize}
	\item [(i)] $\displaystyle\widetilde{\Phi}\left(t, y_{t}, z_{t}\right)-\widetilde{\Phi}(t, y'_{t}, z'_{t})|^{2}\leq K\int_{-T}^{0}\|u(t+v)-u'(t+v)\|^{2}\alpha(dv)$ a.s.,
	\item [(ii)] $\displaystyle \mathbb{E}\left[\int_{0}^{T}|\widetilde{\Phi}(t, 0,0,0)|^{2}dt\right]<+\infty$
	\item [(iii)] $\widetilde{\Phi}(t,\cdot,\cdot,\cdot)=0$ for $t<0$.
\end{itemize}
\end{description}
Next, in view of Theorem \ref{T2}, there exist a unique triple of processes $(\overline{X},\overline{Y},\overline{Z})$ solution of FBSDDE \eqref{FBSDDE2}. Let now set $X=\overline{X},\;Y=\overline{Y},\;Z=\overline{Z}-g(.,\overline{X},\overline{Y})$. Then it not difficult to prove that 
$(X,Y,Z)$ is the solution of FBSDDE \eqref{FBSDDE}. It remain to show that such solution is unique.

In this fact, let us consider $(X',Y',Z')$ an other solution of FBSDDE \eqref{FBSDDE}. Setting $\Delta X=\overline{X}-X',\; \Delta Y =\overline{Y}-Y'$,    
we have
\begin{eqnarray*}
\Delta X (t)&=& \int^{t}_{0}\left[b(s,Y'_s,Z'_s)-b(s,\overline{Y}_s,\overline{Z}_s-g(s,\overline{X}_s,\overline{Y}_s))\right]ds \nonumber \\
&&\int_0^t\left[\sigma(s,Y'_s,Z'_s)-\sigma(s,\overline{Y}_s,\overline{Z}_s-g(s,\overline{Y}_s))\right]dW(s)\\
\Delta Y (t)&=& \int^{T}_{t}\left[f(s,Y'_s,Z'_s)-f(s,\overline{Y}_s,\overline{Z}_s-g(s,\overline{X}_s,\overline{Y}_s))\right]ds \nonumber \\
&&-\int_{t}^{T}\left[Z'(s)+g(s,Y'_s)-\overline{Z}(s))\right]dW(s).
\end{eqnarray*}
It follows from the proof of uniqueness of Theorem \ref{T1} that
\begin{eqnarray*}
&&\E \left[ \sup\limits_{0 \leq t \leqslant T}e^{\beta t}\mid \Delta{X} (t) \mid^2 + \sup\limits_{0 \leq t \leqslant T}e^{\beta t}\mid \Delta{Y}(t)\mid^2 + \int_0^T e^{\beta s}\mid Z'(s)+g(s,X'_s,Y'_s)-\overline{Z}(s)) \mid^2 ds \right]\nonumber\\
&\leq &  24 K e[\max(1,T)]^2\E \left[\sup\limits_{0 \leq t \leqslant T}e^{\beta t}\mid \Delta{X} (t) \mid^2 +\sup\limits_{0 \leq t \leqslant T}e^{\beta t}\mid\Delta{Y}(t) \mid^2\right.\\
&&\left.+\int_0^T e^{\beta s}\mid Z'(s)+g(s,X'_s,Y'_s)-\overline{Z}(s)) \mid^2 ds \right].  
\end{eqnarray*}
Suppose again that $ 24 K e\max(1,T^2)<1$, we have
\begin{eqnarray*}
&&\E \left[ \sup\limits_{0 \leq t \leqslant T}e^{\beta t}\mid \Delta{X} (t) \mid^2 + \sup\limits_{0 \leq t \leqslant T}e^{\beta t}\mid \Delta{Y}(t)\mid^2 + \int_0^T e^{\beta s}\mid Z'(s)+g(s,X'_s,Y'_s)-\overline{Z}(s)) \mid^2 ds \right]=0.
\end{eqnarray*}
Finally we get $X'=\overline{X},\; Y'=\overline{Y}$ and $Z'=\overline{Z}-g(.,\overline{X},\overline{Y})$, which and the proof of Theorem \ref{T1}.

\begin{rem}
In this work we have established a result of existence and uniqueness of FBSDDEs which extends existing work in the scientific literature. First, our paper extends the different results of existence and uniqueness of FBSDEs  without delay, established with several methods respectively in \cite{HP}, \cite{Ma}, etc. Next, our works extend existence and uniqueness result for BSDEs with delayed generator establish by Delong and Imkeller in \cite{DI}. Our condition of existence and uniqueness, $24K e\max(1,T^2)<1$, is equivalent with respect to a constant to the square power of  $T$ with the condition which is established in \cite {DI} which is $8Ke\max(1,T)<1$, but for the more complex type of equation.
\end{rem}
\section{Comments and remarks}

In this section we give a brief overview  on possible applications of introduced  FBSDDEs as a motivation for this paper, as well as some ideas for the future work and extension of the research.

\medskip

Our paper we extended the class of solutions to non homogenous version  of BSDE with delay under particular version of Lipschitz condition for the coefficients, as well as proved the existence and uniqueness result for the FBSDDEs. It would be interesting to explore some versions of non Lipschitz conditions and prove the existence and uniqueness result  and it would be a topic for future work.

Many natural and social phenomena shows that the state process at time $t$ depends not only on its present state but also its past history.  Application of FBSDEs in mathematical finance is already  known where underling asset (or appropriate process of interest) is described via backward part of eq., while the wealth is described via forward part of eq.\ .  In accordance to this, FBSDDEs can be used to model interesting futures from economy and finance where the present value of the point of interest is influenced by the past values, and/or there is a time lag between the realizations and the actual influence/consequences. This can be very much applied to the epidemiology as well as in the mentioned financial and economical application fields, where per example the time between the contact with the virus and manifestation of it differs. Also, the probability of the manifestation is highly dependent of the values from the past. We believe that  FBSDDE if the form which we introduced can catch this properties.  In all mentioned potential applications, there is always  a goal to minimise/maximise value of the process of interest, where this optimisation can be put in stochastic optimal control  (shorter SOC) framework . SOC problems with different conditions on the generators are the topics for future work.

\bigskip

\bigskip

{\bf Acknowledgment:} Jasmina \DJ or\dj evi\'c is supported by RCN Project nr. 274410 --- Stochastics for Time-Space Risk Models (STORM).

\bigskip

\end{document}